\newcommand{\cA}{{\mathcal{A}}}
\newcommand{\cY}{{\mathcal{Y}}}
\newcommand{\RR}{\mathbb{R}}
\newcommand{\prox}{{\mathbf{prox}}}
\DeclareMathOperator{\shrink}{shrink} % shrinkage
\DeclareMathOperator*{\argmin}{arg\,min}
\newcommand{\bc}{\begin{center}}
\newcommand{\ec}{\end{center}}
\newcommand{\bdm}{\begin{displaymath}}
\newcommand{\edm}{\end{displaymath}}
\newcommand{\beq}{\begin{equation}}
\newcommand{\eeq}{\end{equation}}
\newcommand{\bfl}{\begin{flushleft}}
\newcommand{\efl}{\end{flushleft}}
\newcommand{\bt}{\begin{tabbing}}
\newcommand{\et}{\end{tabbing}}
\newcommand{\beqn}{\begin{eqnarray}}
\newcommand{\eeqn}{\end{eqnarray}}
\newcommand{\beqs}{\begin{align*}} % no equation numbers
\newcommand{\eeqs}{\end{align*}}  % no equation numbers
\newcommand{\st}{\mbox{subject to}}
\newtheorem{theorem}{Theorem}
\newtheorem{definition}{Definition}
\newtheorem{remark}{Remark}
\newtheorem{lemma}{Lemma}
\begin{document}

\title{A dual algorithm for a class of augmented convex models}

\author{
Hui Zhang\thanks{Department of Mathematics and Systems Science,
College of Science, National University of Defense Technology,
Changsha, Hunan, China, 410073. Email: \texttt{hhuuii.zhang@gmail.com}}
\and Lizhi Cheng$^*$
\and Wotao Yin\thanks{Department of Mathematics, University of California, Los Angeles, CA. Email: \texttt{wotaoyin@math.ucla.edu} }}
\date{\today}

\maketitle

\begin{abstract}
Convex optimization models find interesting applications, especially in signal/image processing and compressive sensing. We study some  augmented convex models, which are perturbed by strongly convex functions, and propose a dual gradient algorithm. The proposed algorithm includes the linearized Bregman algorithm and the singular value thresholding algorithm as special cases. Based on fundamental properties of proximal operators, we present a concise approach to establish the convergence of both primal and dual sequences, improving the results in the existing literature. %Regard to the dual sequence, previous convergence results only established that the dual sequence of LBreg converges in the sense of projection. On the contrary, we prove that the dual sequence of the primal-dual algorithm actually converges to some solution of dual problem.
\end{abstract}

\textbf{Keywords:} augmented convex model; Lagrange dual; primal-dual, proximal operator; signal recovery

\section{Introduction}
The past two decades have witnessed several successful convex models for signal processing. They include, but are not limited to, the total variation model \cite{rof} and the basis pursuit model \cite{cds}, both of which have been widely applied in signal/image processing and compressed sensing. Recently, augmented convex models \cite{z1,z2,ly}, obtained by adding strongly convex perturbations to the original objective functions, are introduced for fast computation \cite{z3} and for incorporating certain prior information regarding the underlying signal \cite{zh}. Well known in convex analysis \cite{r}, if the original problem is strongly convex, then the dual problem is differentiable and can thus take advantages of  a rich set of gradient-based optimization techniques. In addition, the augmented term  can reflect certain structures of  the target signal, for example, the group structure through the term $\|\cdot\|_2^2$ in the elastic net model \cite{zh}.

This paper is devoted to analyzing a dual gradient algorithm for a class of augmented convex models. %We derive a dual ascent algorithm following the Lagrange dual analysis.
The proposed algorithm is motivated by and includes two well-known algorithms as its special cases: the linearized Bregman algorithm (LBreg) \cite{y1,y2} and the singular value thresholding algorithm (SVT) \cite{c3}. Based on some fundamental properties of proximal operators, we prove the convergence of both the primal and dual point sequences.  {This result is stronger than the previously shown vanishing distance between the dual sequence and the dual solution set (the dual sequence itself is not shown to converge).}

\section{Augmented convex models for signal recovery}

{Let $x\in\RR^n$.} Consider the convex problem
\begin{equation}\label{Nrm}
\min  \mu\|x\|, \quad\st~ \mathcal{A} x=b,
\end{equation}
and its augmented model
\begin{equation}\label{Aug}
\min  P(x)\triangleq\mu\|x\| +\frac{\mu}{2\tau} \|x\|_2^2, \quad\st~ \mathcal{A} x=b,
\end{equation}
where $\|\cdot\|$ is a norm whose dual norm is denoted by $\|\cdot\|_\diamondsuit$, $\frac{\mu}{2\tau} \|x\|_2^2$ is the \emph{augmented} term, linear operator $\mathcal{A}:\RR^n\to\RR^m$ and observed data $b\in\RR^m$ are given, and $\tau, \mu$ are positive parameters. Throughout the paper, we assume that  $\mathcal{A} x=b$ is consistent. Parameter $\mu$ is redundant to both objectives and does not affect the solutions, but it is kept in order to unify the models and algorithms that appear in the previous literature. Parameter $\tau$ weights  the term $\|\cdot\|_2^2$ and  affects the solution to \eqref{Aug} when it falls in a certain range. In what follows, we give a few examples of \eqref{Aug} in signal recovery. % In each example, the lower bound of $\tau$ will be discussed.

\textit{Example 1} (Augmented $\ell_1$ norm and nuclear-norm models)  Paper \cite{ly} proposes the augmented $\ell_1$ norm model for sparse signal recovery
\begin{equation}\label{AugL1}
\min    \|x\|_1 +\frac{1}{2\tau} \|x\|_2^2, \quad\st~ A x=b
\end{equation}
and the augmented nuclear-norm model for low-rank matrix recovery
\begin{equation}\label{AugNu}
\min   \|X\|_* +\frac{1}{2\tau} \|X\|_F^2, \quad\st~ \mathcal{A}(X)=b.
\end{equation}
Suppose $b=Ax^0$ where  $x^0$ is a sparse vector. Model \eqref{AugL1} will recover $x^0$ provided that  $\tau\geq 10\|x^0\|_\infty$  and the sensing matrix $A$ satisfies  certain conditions such as the null-space property and restricted isometry property. Similarly,  $\tau\geq 10\|X^0\|$ is used for recovering a low-rank matrix $X^0$, where $\|X^0\|$ is its spectral norm.

\textit{Example 2} (Strongly convex matrix completion model) Papers \cite{z1,z2} study the following strongly convex model for matrix completion
\begin{equation}\label{Str1}
\min    \|X\|_* +\frac{1}{2\tau} \|X\|_F^2, \quad\st~ \mathcal{P}_\Omega(X) =\mathcal{P}_\Omega(M),
\end{equation}
where $M$ is a low-rank matrix, $\Omega$ is the sample index set, and  $\mathcal{P}_{\Omega}$ is the corresponding element-selection operator. To recover the low-rank matrix $M$, the best known  bound is $\tau\ge\frac{4}{p}\|\mathcal{P}_{\Omega}(M)\|_F$ given in \cite{z2}, where $p$ is the sample ratio.

\textit{Example 3} (Strongly convex RPCA model) Paper \cite{z2} studies the following strongly convex model for robust principle component analysis (RPCA):
\begin{equation}\label{Str2}
\min_{L,S}    \|L\|_*+\frac{1}{2\tau}\|L\|^2_F+\lambda\|S\|_1+\frac{1}{2\tau}\|S\|^2_F, \quad\st~ D=L+S,
\end{equation}
where $D$ is the observed data matrix and $\lambda$ is a given parameter. {Bound $\tau\ge \frac{8\sqrt{15}\|D\|_F}{3\lambda}$ guarantees the  decomposition of the observed matrix $D$ into its low-rank component and sparse component \cite{ywl}.}

Below section \ref{sc:dualalg} presents a dual  gradient algorithm for problem \eqref{Aug} and section \ref{sc:convg} studies its convergence. Section \ref{sc:gauge} extends these results to problems with guage objective functions.

\section{A dual gradient algorithm}\label{sc:dualalg}
In this section, we first introduce  properties of proximal operators required for convergence analysis. Then, we derive a Lagrange dual problem and  an iterative gradient algorithm for solving it. %to solve the augmented convex model (\ref{Aug}) at the end of this section.
\subsection{Proximal operators}
Let $f: \RR^n\rightarrow R\cup\{+\infty\}$ be a closed proper convex function. The proximal operator \cite{m}   $\prox_f :\RR^n\rightarrow \RR^n$ is defined by
\begin{equation}
\prox_f(v)=\argmin_x\left(f(x)+\frac{1}{2}\|x-v\|_2^2\right).
\end{equation}
Since the objective function is strongly convex and proper, $\prox_f(v)$ is properly defined for every $v\in\RR^n$. The following properties \cite{m,pb} will be used in our analysis.
 \begin{lemma}\label{lem1}
 Let $f: \RR^n\rightarrow R\cup\{+\infty\}$ be a closed proper convex function. Then, for all $x, y\in \RR^n$ the proximal operator $\prox_{f(\cdot)}$ satisfies the followings:
 \begin{enumerate}
  \item Firmly nonexpansive: $ \|\prox_{f(\cdot)}(x)-\prox_{f(\cdot)}(y)\|_2^2\leq \langle x-y, \prox_{f(\cdot)}(x)-\prox_{f(\cdot)}(y)\rangle$

  \item Lipschitz continuous: $\|\prox_{f(\cdot)}(x)-\prox_{f(\cdot)}(y)\|_2\leq \|x-y\|_2$
  \end{enumerate}
 \end{lemma}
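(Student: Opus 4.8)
The plan is to reduce both statements to the first-order optimality condition of the strongly convex minimization problem defining $\prox_{f(\cdot)}$. First I would record the characterization: since $f$ is closed proper convex, the point $u=\prox_{f(\cdot)}(x)$ is the unique minimizer of $f(\cdot)+\frac{1}{2}\|\cdot-x\|_2^2$, and the optimality condition states that $0$ lies in the subdifferential of this objective at $u$. Because the quadratic term is everywhere differentiable with gradient $u-x$ at $u$, the subdifferential sum rule applies without any constraint qualification and yields the equivalent inclusion $x-u\in\partial f(u)$. The same reasoning gives $y-v\in\partial f(v)$, where $v=\prox_{f(\cdot)}(y)$.

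Next, to establish the firm nonexpansiveness in part $(i)$, I would invoke the monotonicity of the subdifferential of a convex function: for any subgradients $p\in\partial f(u)$ and $q\in\partial f(v)$ one has $\langle p-q,\,u-v\rangle\geq 0$. Applying this with $p=x-u$ and $q=y-v$ produces $\langle (x-u)-(y-v),\,u-v\rangle\geq 0$, and rearranging the inner product gives exactly $\|u-v\|_2^2\leq\langle x-y,\,u-v\rangle$, which is the claimed inequality. To keep the argument self-contained, I would note that monotonicity itself follows simply by adding the two subgradient inequalities $f(v)\geq f(u)+\langle p,\,v-u\rangle$ and $f(u)\geq f(v)+\langle q,\,u-v\rangle$.

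Finally, part $(ii)$ follows from part $(i)$ by a single application of the Cauchy--Schwarz inequality: $\|u-v\|_2^2\leq\langle x-y,\,u-v\rangle\leq\|x-y\|_2\,\|u-v\|_2$, and dividing through by $\|u-v\|_2$ (the bound being trivial when $u=v$) yields $\|u-v\|_2\leq\|x-y\|_2$.

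I do not expect a genuine obstacle here, as the argument is standard; the only point demanding care is the justification of the optimality characterization and the subdifferential sum rule, both of which rely on $f$ being proper convex together with the smoothness of the quadratic penalty. Once that characterization is in hand, everything reduces to monotonicity of the subdifferential and an elementary Cauchy--Schwarz estimate.
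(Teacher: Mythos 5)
Your proof is correct and is the standard argument: the paper itself states Lemma \ref{lem1} without proof, simply citing Moreau and Parikh--Boyd, and your derivation (optimality condition $x-u\in\partial f(u)$, monotonicity of the subdifferential to get firm nonexpansiveness, then Cauchy--Schwarz for the Lipschitz bound) is exactly the argument found in those references. No gaps; the care you take with the sum rule, which holds here because the quadratic term is finite and differentiable everywhere, is the only point that needs justification and you handle it correctly.
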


 \begin{lemma}\label{lem2}
 For any $\tau>0$ and norm $\|\cdot\|$, it holds that $\tau\cdot\prox_{\|\cdot\|}(\frac{1}{\tau}v)=\prox_{\tau\|\cdot\|}(v)$
 \end{lemma}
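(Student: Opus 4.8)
The plan is to prove the identity by a single change of variables in the defining minimization, exploiting the fact that every norm is positively homogeneous. First I would write out the right-hand side directly from the definition of the proximal operator,
\begin{equation*}
\prox_{\tau\|\cdot\|}(v)=\argmin_x\left(\tau\|x\|+\frac{1}{2}\|x-v\|_2^2\right),
\end{equation*}
noting that, exactly as recorded in the text after the definition, the objective here is strongly convex and proper, so the minimizer exists and is unique; this makes the use of $\argmin$ unambiguous and is what lets me match minimizers on the two sides rather than merely matching minimal values.

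Next I would substitute $x=\tau y$. Since $\tau>0$, this is a bijection of $\RR^n$, so minimizing over $x$ is equivalent to minimizing over $y$. Using positive homogeneity, $\|\tau y\|=\tau\|y\|$, and pulling $\tau$ out of the quadratic term via $\|\tau y-v\|_2^2=\tau^2\|y-\tfrac{1}{\tau}v\|_2^2$, the objective becomes
\begin{equation*}
\tau\|\tau y\|+\frac{1}{2}\|\tau y-v\|_2^2=\tau^2\left(\|y\|+\frac{1}{2}\Big\|y-\tfrac{1}{\tau}v\Big\|_2^2\right).
\end{equation*}
Because $\tau^2>0$ is a fixed positive constant, it does not affect the location of the minimizer, so the optimal $y$ is precisely $\prox_{\|\cdot\|}(\tfrac{1}{\tau}v)$.

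Finally I would translate back through $x=\tau y$: the optimal $x$ equals $\tau\,\prox_{\|\cdot\|}(\tfrac{1}{\tau}v)$, and since that same optimal $x$ is by construction $\prox_{\tau\|\cdot\|}(v)$, the two expressions coincide, giving the claim. I do not expect a genuine obstacle here, as the argument is essentially a scaling computation; the only point requiring a small amount of care is the bookkeeping that justifies transporting the $\argmin$ through the change of variables, namely that $x\mapsto x/\tau$ is a bijection and that multiplying an objective by a positive constant preserves its unique minimizer. Uniqueness, supplied by the strong convexity noted above, is what makes these steps rigorous rather than merely formal.
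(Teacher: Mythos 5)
Your proof is correct and follows essentially the same route as the paper's: a change of variables $x=\tau y$ in the defining minimization, using positive homogeneity of the norm and the fact that a positive constant factor does not move the (unique, by strong convexity) minimizer. The only difference is cosmetic — you start from $\prox_{\tau\|\cdot\|}(v)$ and substitute, while the paper starts from $w=\prox_{\|\cdot\|}(\frac{1}{\tau}v)$ and rescales its objective — so nothing further is needed.
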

 \begin{proof}
 Let $u=\prox_{\tau\|\cdot\|}(v)=\argmin_z \tau\cdot \|z\|+\frac{1}{2}\|z-v\|_2^2$.
 Let $w=\prox_{\|\cdot\|}(\frac{1}{\tau}v)$ and $\bar{u}=\tau w$. Since
 \begin{subequations}
\begin{align}
w & =\argmin \|x\|+\frac{1}{2}\|x-\frac{1}{\tau}v\|_2^2 \nonumber \\
&=\argmin \tau\cdot \|\tau x\|+\frac{1}{2}\|\tau x-v\|_2^2 \nonumber
\end{align}
\end{subequations}
after the change of variable $\tau x\to z$, we have $u=\tau w=\bar{u}$. This completes the proof.
\end{proof}

\begin{remark}
 Lemma \ref{lem2} remains valid if $\|\cdot\|$ is replaced by a closed proper convex function $f$ that is one-homogeneous.  We, however, restrict our attention to  $f(\cdot)=\|\cdot\|$ for brevity. %From the proof of Lemma \ref{lem2}, only positive homogeneousness of $\|\cdot\|$ is used. Therefore, the conclusion holds for any positive homogeneous function.
\end{remark}

 \begin{lemma}\label{lem3}
 Moreau decomposition:  any $v\in \RR^n$ can be decomposed as $v=\prox_f(v)+\prox_{f^*}(v)$, where $f^*=\sup_x(\langle y, x\rangle -f(x))$ is the convex conjugate of $f$.
 \end{lemma}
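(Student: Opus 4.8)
The plan is to characterize each proximal point by its first-order optimality condition and then to connect the two conditions through the subgradient correspondence between $f$ and its conjugate $f^*$. First I would set $x=\prox_f(v)$. Since $x$ minimizes the strongly convex function $u\mapsto f(u)+\frac{1}{2}\|u-v\|_2^2$, Fermat's rule gives $0\in\partial f(x)+(x-v)$, that is,
\begin{equation}
v-x\in\partial f(x).
\end{equation}

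Next I would invoke the key structural fact from convex analysis: for a closed proper convex function $f$ one has the equivalence $y\in\partial f(x)\iff x\in\partial f^*(y)$. Applying this with $y=v-x$, the inclusion above becomes $x\in\partial f^*(v-x)$. Writing $z=v-x$ so that $x=v-z$, this reads $v-z\in\partial f^*(z)$, i.e. $0\in\partial f^*(z)+(z-v)$. But this is exactly Fermat's rule for the minimization of $u\mapsto f^*(u)+\frac{1}{2}\|u-v\|_2^2$, whose unique minimizer is $\prox_{f^*}(v)$. Hence $z=\prox_{f^*}(v)$, and combining with $v=x+z$ yields the claimed identity $v=\prox_f(v)+\prox_{f^*}(v)$.

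The step I expect to carry the real content is the subgradient correspondence $y\in\partial f(x)\iff x\in\partial f^*(y)$. This is where the hypothesis that $f$ is closed (lower semicontinuous) proper convex is essential, since it rests on the biconjugate theorem $f^{**}=f$: each side of the equivalence is equivalent to the Fenchel--Young equality $f(x)+f^*(y)=\langle x,y\rangle$, and recovering $f$ from $f^*$ in the reverse direction requires $f^{**}=f$. Everything else is routine: both proximal operators are well defined and single-valued by strong convexity (as already noted for $\prox_f$ in the text), and the two optimality inclusions are manifestly the same inclusion read in the two variables $x$ and $z=v-x$. An alternative, equally short route would avoid subdifferentials altogether by using the Moreau envelope identity together with the self-duality of $\frac{1}{2}\|\cdot\|_2^2$, but the optimality-condition argument above seems the most transparent.
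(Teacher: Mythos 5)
Your argument is correct and is the standard proof of Moreau's decomposition: characterize $\prox_f(v)$ by the inclusion $v-x\in\partial f(x)$, flip it to $x\in\partial f^*(v-x)$ via the conjugate subgradient correspondence (which, as you rightly note, is where closedness of $f$ enters through $f^{**}=f$), and recognize the result as the optimality condition defining $\prox_{f^*}(v)$. The paper itself offers no proof of this lemma --- it is quoted from the literature (Moreau; Parikh and Boyd) --- so there is nothing to compare against; your derivation would serve as a self-contained justification and contains no gaps.
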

There is a close relationship between  proximal and projection operators.
Let $\mathcal{B}=\{z: \|z\|_\diamondsuit\leq 1\}$ and consider the projection onto $\mathcal{B}$: $\Pi_\mathcal{B}(v)=\argmin_{x\in \mathcal{B}}\|x-v\|_2$.  Applying the Moreau decomposition to $\|\cdot\|$, we have
\begin{equation}\label{Moreau}
v=\prox_{\|\cdot\|}(v)+\Pi_\mathcal{B}(v).
\end{equation}
In order to derive the gradient of  $D(y)$,  we define the point-to-set function
\begin{equation}\label{hfun}
h_\mathcal{Z}(x)=\min_{z\in \mathcal{Z}}\|x-z\|_2,
\end{equation}
where $\mathcal{Z}$ is a closed convex set.
Following Example 2.79 in \cite{ru}, it holds that
 \begin{eqnarray}
h_\mathcal{Z}(x)=\left\{\begin{array}{ll}
\|x-\Pi_\mathcal{Z}(x)\|_2~~&x\notin \mathcal{Z} \\
0~~&x\in\mathcal{Z},
\end{array} \right.
\end{eqnarray} and
 \begin{eqnarray}
 \label{gradh}
\nabla h_\mathcal{Z}(x)=\left\{\begin{array}{ll}
\frac{x-\Pi_\mathcal{Z}(x)}{\|x-\Pi_\mathcal{Z}(x)\|_2}~~&x\notin \mathcal{Z} \\
0~~&x\in\mathcal{Z}.
\end{array} \right.
\end{eqnarray}

\subsection{Lagrange dual analysis}\label{sec:3}
The Lagrangian of the augmented convex model (\ref{Aug}) is
\begin{equation}
L(x, y)=\mu\|x\| +\frac{\mu}{2\tau} \|x\|_2^2 +\langle y, b-\mathcal{A}x\rangle.
\end{equation}
Following $\|x\|=\max_{\|z\|_\diamondsuit\leq 1}\langle x, z\rangle$, we derive the dual function as
\begin{subequations}
\begin{align}
L_D(y)=\min_x L(x, y)& =\langle y, b\rangle +\min_x\max_{\|z\|_\diamondsuit\leq 1}\mu\langle x, z\rangle+\frac{\mu}{2\tau}\|x\|_2^2-\langle \mathcal{A}^*y, x\rangle \nonumber \\
& =\langle y, b\rangle +\max_{\|z\|_\diamondsuit\leq 1}\min_x\mu\langle x, z\rangle+\frac{\mu}{2\tau}\|x\|_2^2-\langle \mathcal{A}^*y, x\rangle \nonumber \\
&=\langle y, b\rangle -\frac{\tau}{2\mu} \min_{\|z\|_\diamondsuit\leq 1} \|\mathcal{A}^*y-\mu z\|_2^2  \nonumber\\
&=\langle y, b\rangle -\frac{\tau\mu}{2} \min_{\|z\|_\diamondsuit\leq 1} \|\frac{1}{\mu}\mathcal{A}^*y-  z\|_2^2  \nonumber
\end{align}
\end{subequations}
where the $x$-minimization problem has solution $x=\frac{\tau}{\mu}(\mathcal{A}^*y-\mu z)$. Hence, the dual problem is
\begin{equation}\label{Dual}
\max_y L_D(y)=-\min D(y),\quad \text{where}~D(y)\triangleq -\langle y, b\rangle +\frac{\tau\mu}{2} \min_{\|z\|_\diamondsuit\leq 1} \|\frac{1}{\mu}\mathcal{A}^*y -  z\|_2^2.
 \end{equation}
 The minimum over  $z$ is obtained at $z=\Pi_\mathcal{B}(\frac{1}{\mu}\mathcal{A}^*y)$.
To distinguish $L_D(y)$ and $D(y)$, we call the later dual objective. Following the definition of $h_\mathcal{Z}(x)$ in (\ref{hfun}), $D(y)$ can be written equivalently as
$$D(y)= -\langle y, b\rangle +\frac{\tau\mu}{2} h_\mathcal{B}(\frac{1}{\mu}\mathcal{A}^*y )^2.$$
Following from \eqref{gradh}, the gradient of $D(y)$ is
\begin{equation}
\nabla D(y)= -b +\tau \mathcal{A} \left(\frac{1}{\mu}\mathcal{A}^*y-\Pi_\mathcal{B}(\frac{1}{\mu}\mathcal{A}^*y)\right),
\end{equation}
and, due to \eqref{Moreau},
\begin{equation}\label{gradD}
\nabla D(y)= -b +\tau\mathcal{A} \cdot\prox_{\|\cdot\|}\left(\frac{1}{\mu}\mathcal{A}^*y\right).
\end{equation}
We highlight the primal-dual relationship:
$x=\frac{\tau}{\mu}(\mathcal{A}^*y-\mu z)$ and
 $z=\Pi_\mathcal{B}(\frac{1}{\mu}\mathcal{A}^*y)$. If $y$ is dual optimal, by standard convex analysis,  $x$ in the relationship is primal optimal.
\begin{lemma}\label{doptset}
 Let $\hat{x}$ be the unique solution to problem \eqref{Aug}. Then the dual solution set to problem \eqref{Dual} is
\begin{equation}\label{dualY}
\mathcal{Y}=\left\{y: \tau\cdot \prox_{\|\cdot\|}(\frac{1}{\mu}\mathcal{A}^*y)=\hat{x}\right\},
\end{equation}
  which is  nonempty and convex.
  \end{lemma}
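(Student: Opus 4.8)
The plan is to identify $\mathcal{Y}$ with the set of minimizers of the convex, differentiable dual objective $D$, and to run this identification through the explicit gradient formula \eqref{gradD}. The first step is to record the bridge between the definition of $\mathcal{Y}$ and the primal-dual relationship. Writing $x(y):=\tau\cdot\prox_{\|\cdot\|}(\frac{1}{\mu}\mathcal{A}^*y)$, I would apply the Moreau decomposition \eqref{Moreau} at the point $\frac{1}{\mu}\mathcal{A}^*y$ to get $\prox_{\|\cdot\|}(\frac{1}{\mu}\mathcal{A}^*y)=\frac{1}{\mu}\mathcal{A}^*y-z$ with $z=\Pi_{\mathcal{B}}(\frac{1}{\mu}\mathcal{A}^*y)$, so that $x(y)=\frac{\tau}{\mu}(\mathcal{A}^*y-\mu z)$ is exactly the primal point attached to $y$ in the primal-dual relationship. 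Hence $\mathcal{Y}=\{y:x(y)=\hat{x}\}$ is the set of dual points whose attached primal point equals the unique primal optimum.

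Next I would characterize dual optimality through $\nabla D$. Since $D$ is convex and differentiable with gradient \eqref{gradD}, a point $y$ solves \eqref{Dual} if and only if $\nabla D(y)=0$, that is, if and only if $\mathcal{A}\,x(y)=b$. With this in hand the set equality $\mathcal{Y}=\argmin_y D(y)$ is established in two directions. If $y\in\mathcal{Y}$, then $x(y)=\hat{x}$ is primal feasible, so $\mathcal{A}\,x(y)=\mathcal{A}\hat{x}=b$ and therefore $\nabla D(y)=-b+\mathcal{A}\,x(y)=0$, i.e.\ $y$ is dual optimal. Conversely, if $y$ is dual optimal, then the primal point $x(y)$ attached to $y$ is primal optimal by the primal-dual relationship quoted above the lemma, and uniqueness of the solution to \eqref{Aug} forces $x(y)=\hat{x}$, i.e.\ $y\in\mathcal{Y}$.

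Convexity is then immediate: $\mathcal{Y}=\argmin_y D(y)$ is the solution set of a convex minimization problem, hence convex. For nonemptiness I would invoke strong duality. Problem \eqref{Aug} is a convex program with finite-valued objective (so $\dom P=\RR^n$) and the affine constraint $\mathcal{A}x=b$ is consistent by assumption; affine equality constraints together with a feasible point in $\operatorname{ri}(\dom P)=\RR^n$ supply the constraint qualification needed for a Lagrange multiplier to exist. Concretely, the KKT stationarity condition at $\hat{x}$, namely $\mathcal{A}^*y\in\mu\,\partial\|\hat{x}\|+\frac{\mu}{\tau}\hat{x}$, is solvable, and any such $y$ lies in $\mathcal{Y}$ by the equivalence just proved.

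The main obstacle is making nonemptiness airtight: the set equality is essentially bookkeeping once the Moreau identity and the gradient formula are available, whereas the genuinely non-formal step is guaranteeing that the dual optimum is attained. I would lean on the standard result that for a convex objective with full effective domain and consistent affine equality constraints strong duality holds and a dual optimal solution exists, which avoids a separate coercivity or compactness analysis of $D$ (note $D$ is in general only constant, not coercive, along directions in $\ke\mathcal{A}^*$, so a direct attainment argument would require passing to the quotient by $\ke\mathcal{A}^*$).
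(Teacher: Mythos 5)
Your proposal is correct and follows essentially the same route as the paper's proof: identify the dual solution set with $\{y:\nabla D(y)=0\}$ via the gradient formula \eqref{gradD}, use the Moreau decomposition to match $\tau\cdot\prox_{\|\cdot\|}(\frac{1}{\mu}\mathcal{A}^*y)$ with the primal point in the primal--dual relationship, and close the reverse inclusion by uniqueness of $\hat{x}$, with convexity coming from convexity of the dual problem. The only difference is that you spell out the dual-attainment step (constraint qualification from a full-domain objective plus consistent affine constraints), where the paper simply asserts that consistency of $\mathcal{A}x=b$ gives nonemptiness; your added detail is a strict improvement, not a departure.
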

   \begin{proof}
     From convex analysis and \eqref{gradD} it follows that the dual solution set is $\mathcal{Y}^{'}=\{y: \nabla D(y)=0\}=\{y: \tau\mathcal{A}\cdot \prox_{\|\cdot\|}(\frac{1}{\mu}\mathcal{A}^*y)=b\}$. Comparing this to \eqref{dualY} and since $\cA\cdot\hat{x}=b$, we have $\mathcal{Y}\subset \mathcal{Y}^{'}$. Therefore, it suffices to show $\mathcal{Y}^{'}\subset \mathcal{Y}$. Indeed, let $\hat{y}\in \mathcal{Y}^{'}$; following the primal-dual relationship, $\hat{y}$ shall give optimal $\hat{x}$, i.e.,
  $$\frac{\tau}{\mu}\left(\mathcal{A}^*\hat{y}-\mu \Pi_\mathcal{B}(\frac{1}{\mu}\mathcal{A}^*\hat{y})\right)=\frac{\tau}{\mu}(\mathcal{A}^*\hat{y}-\mu \hat{z})=\hat{x}.$$
Since the left-hand side equals, $\tau\cdot \prox_{\|\cdot\|}(\frac{1}{\mu}\mathcal{A}^*\hat{y})$,
by the definition $\cY$, we have $\hat{y}\in \mathcal{Y}$.
  The convexity of $\mathcal{Y}$ follows from the convexity of primal problem; it can also be seen through:
  \begin{subequations}
\begin{align}
\langle \nabla D(y_1)-\nabla D(y_2), y_1-y_2\rangle &=\tau \mu \langle \prox_{\|\cdot\|}(\frac{1}{\mu}\mathcal{A}^*y_1)-\prox_{\|\cdot\|}(\frac{1}{\mu}\mathcal{A}^*y_2),\frac{1}{\mu}\mathcal{A}^*y_1-\frac{1}{\mu}\mathcal{A}^*y_2\rangle \\
\geq&\tau \mu \left\|\prox_{\|\cdot\|}(\frac{1}{\mu}\mathcal{A}^*y_1)-\prox_{\|\cdot\|}(\frac{1}{\mu}\mathcal{A}^*y_2) \right\|_2^2\geq 0,
\end{align}
\end{subequations}
where the inequality follows from Lemma \ref{lem1}.
The consistency of $\mathcal{A}x=b$ guarantees $\mathcal{Y}$ to be nonempty.
\end{proof}

\subsection{Algorithm and examples}
Applying the gradient iteration to the dual objective $D(y)$ gives:
\begin{equation}\label{form1}
y^{k+1}=y^k+h\left(b-\tau\mathcal{A} \cdot\prox_{\|\cdot\|}(\frac{1}{\mu}\mathcal{A}^*y^k)\right),
\end{equation}
where $h>0$ is the step size whose range shall be studied later for convergence. By setting $x^{k+1}= \tau\cdot\prox_{\|\cdot\|}(\frac{1}{\mu}\mathcal{A}^*y^k)$, we obtain the
equivalent iteration in the primal-dual form:
\begin{eqnarray}\label{form2}
\left\{\begin{array}{ll}
x^{k+1}= \tau\cdot\prox_{\|\cdot\|}(\frac{1}{\mu}\mathcal{A}^*y^k) \\
y^{k+1}=y^{k}+h(b-\mathcal{A}x^{k+1}).
\end{array} \right.
\end{eqnarray}
Recalling $\tau\cdot\prox_{\|\cdot\|}(\frac{1}{\tau}v)=\prox_{\tau\|\cdot\|}(v)$ from Lemma \ref{lem2} and setting $\mu=\tau$, we simply it to
\begin{eqnarray}\label{form3}
\left\{\begin{array}{ll}
x^{k+1}= \prox_{\tau\|\cdot\|}(\mathcal{A}^*y^k) \\
y^{k+1}=y^{k}+h(b-\mathcal{A}x^{k+1}).
\end{array} \right.
\end{eqnarray}

\textit{Example 1} (The LBreg algorithm) It is a well studied algorithm for solving the augmented $\ell_1$-norm model and has the following primal-dual form
  \begin{eqnarray}\nonumber
\left\{\begin{array}{ll}
x^{k+1}= \tau \cdot \textrm{shrink}(A^Ty^{k}) \\
y^{k+1}=y^{(k)}+h (b-Ax^{k+1})
\end{array} \right.
\end{eqnarray}
where $\shrink(\cdot)$ equals $\prox_{\|\cdot\|_1}(\cdot)$. It is a special case of (\ref{form2}).
The iteration is proposed in \cite{y1} and its convergence analyzed in \cite{c1,c2,y2}. Then paper \cite{ly} establishes its global geometric convergence, whose rate is further improved in \cite{z3}.

\textit{Example 2} (The SVT algorithm)  It is a well-known algorithm for matrix complete and has the following primal-dual form
  \begin{eqnarray}\nonumber
\left\{\begin{array}{ll}
X^{k+1}=  \mathcal{D}_\tau(Y^{k}) \\
Y^{k+1}=Y^{k}+h\cdot\mathcal{P}_\Omega(M-X^{k+1})
\end{array} \right.
\end{eqnarray}
where $\mathcal{D}_\tau(\cdot)$ equals $\prox_{\tau\|\cdot\|_*}(\cdot)$. Generally, we take $Y^0=0$ so that $\mathcal{P}_\Omega(Y^{k})=Y^{k}$ from $Y^{k+1}=Y^{k}+h\cdot\mathcal{P}_\Omega(M-X^{k+1})$. Moreover, $\mathcal{P}_\Omega=\mathcal{P}_\Omega^*$. Hence, it is a special case of (\ref{form3}) with $\mathcal{A}=\mathcal{P}_\Omega$. It is proposed in \cite{c3}.

\begin{remark}
In paper \cite{hmg}, Nesterov's first-order methods \cite{n1} are applied to accelerate the LBreg and  SVT algorithms. Further speedup is introduced in \cite{z3} by combining Nesterov's methods \cite{n1,n2} with an adaptive restart technique \cite{oc}. With little effort, they can be applied to the primal-dual algorithm (\ref{form2}). %For brevity, we omit the details.
\end{remark}
%\textit{Example 3}(The IT algorithm)

\section{Convergence analysis}\label{sc:convg}
In this part, we prove the convergence of primal sequence $\{x^k\}$ and dual sequence $\{y^k\}$ in iteration (\ref{form2}).
\begin{theorem}\label{thm:cvg}
Set step size $h\in (0, \frac{2\mu}{\tau\|\mathcal{A}\|^2})$ and $y^0=0$ in iteration (\ref{form2}). Let $\hat{x}$ be the unique minimizer to problem (\ref{Aug}) and $\mathcal{Y}$ be the solution set to problem \eqref{Dual}. Then, $\lim_{k\rightarrow +\infty}x^k=\hat{x}$, and there exists a point $\bar{y}\in\mathcal{Y}$ such that $\lim_{k\rightarrow +\infty} y^k=\bar{y}$.
\end{theorem}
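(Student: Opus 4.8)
The plan is to recognize iteration \eqref{form2} as plain gradient descent $y^{k+1}=y^k-h\grad D(y^k)$ on the convex dual objective $D$, and then to run a Fej\'er-monotonicity argument that delivers convergence of the \emph{entire} dual sequence, not merely the vanishing of its distance to $\mathcal{Y}$.

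First I would establish that $\grad D$ is co-coercive (inverse strongly monotone) with modulus $1/L$, where $L=\frac{\tau\|\mathcal{A}\|^2}{\mu}$. The computation already carried out in the proof of Lemma \ref{doptset}, together with the firm nonexpansiveness in Lemma \ref{lem1}, gives
$$\langle \grad D(y_1)-\grad D(y_2),\, y_1-y_2\rangle \;\ge\; \tau\mu\left\|\prox_{\|\cdot\|}(\tfrac{1}{\mu}\mathcal{A}^*y_1)-\prox_{\|\cdot\|}(\tfrac{1}{\mu}\mathcal{A}^*y_2)\right\|_2^2 .$$
Combining this with the Lipschitz bound $\|\grad D(y_1)-\grad D(y_2)\|_2\le \tau\|\mathcal{A}\|\,\|\prox_{\|\cdot\|}(\tfrac{1}{\mu}\mathcal{A}^*y_1)-\prox_{\|\cdot\|}(\tfrac{1}{\mu}\mathcal{A}^*y_2)\|_2$ yields $\langle \grad D(y_1)-\grad D(y_2),y_1-y_2\rangle\ge \frac{1}{L}\|\grad D(y_1)-\grad D(y_2)\|_2^2$. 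This route relies only on Lemma \ref{lem1} and avoids invoking the Baillon--Haddad theorem.

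Next, fixing any $y^*\in\mathcal{Y}$ (so $\grad D(y^*)=0$ by Lemma \ref{doptset}), I would expand $\|y^{k+1}-y^*\|_2^2$ and substitute co-coercivity to obtain
$$\|y^{k+1}-y^*\|_2^2\;\le\;\|y^k-y^*\|_2^2-h\Big(\tfrac{2}{L}-h\Big)\|\grad D(y^k)\|_2^2 .$$
Since $h\in\big(0,\tfrac{2\mu}{\tau\|\mathcal{A}\|^2}\big)=\big(0,\tfrac{2}{L}\big)$, the bracketed factor is positive, so $\{y^k\}$ is Fej\'er monotone with respect to $\mathcal{Y}$. This immediately supplies boundedness of $\{y^k\}$, convergence of the scalar sequence $\{\|y^k-y^*\|_2\}$, and, by telescoping, $\sum_k\|\grad D(y^k)\|_2^2<\infty$, hence $\grad D(y^k)\to 0$.

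The decisive step---the one upgrading ``vanishing distance'' to genuine convergence---is the Fej\'er argument for the iterates themselves. By boundedness, $\{y^k\}$ admits a convergent subsequence $y^{k_j}\to\bar y$; continuity of $\grad D$ and $\grad D(y^k)\to 0$ force $\grad D(\bar y)=0$, so $\bar y\in\mathcal{Y}$. Applying the Fej\'er inequality with $y^*=\bar y$, the sequence $\{\|y^k-\bar y\|_2\}$ is non-increasing yet has a subsequence tending to $0$; being monotone, the whole sequence tends to $0$, i.e. $y^k\to\bar y$. Primal convergence is then immediate: since $x^{k+1}=\tau\cdot\prox_{\|\cdot\|}(\tfrac{1}{\mu}\mathcal{A}^*y^k)$ and $\prox_{\|\cdot\|}$ is continuous (Lemma \ref{lem1}), we get $x^{k+1}\to\tau\cdot\prox_{\|\cdot\|}(\tfrac{1}{\mu}\mathcal{A}^*\bar y)=\hat x$ because $\bar y\in\mathcal{Y}$. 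I expect the only delicate point to be identifying every limit point as a member of $\mathcal{Y}$ and then feeding one such limit back into the monotone scalar sequence; the hypothesis $y^0=0$ is inessential to convergence and only fixes the particular limit.
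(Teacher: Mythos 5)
Your proof is correct and follows essentially the same route as the paper's: a Fej\'er-monotonicity argument grounded in the firm nonexpansiveness of the proximal operator (Lemma \ref{lem1}), followed by extraction of a convergent subsequence whose limit is identified as a member of $\mathcal{Y}$ and then fed back into the monotone distance sequence to upgrade subsequential to full convergence. The only cosmetic difference is that you package the key inequality as co-coercivity of $\grad D$ and measure the per-step decrease by $\|\grad D(y^k)\|_2^2$, whereas the paper measures it directly by $\|x^{k+1}-\hat{x}\|_2^2$ (a slightly stronger bound, since $\grad D(y^k)=\mathcal{A}(x^{k+1}-\hat{x})$), which lets it read off primal convergence before, rather than after, dual convergence.
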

\begin{proof}
Let $\hat{y}\in \mathcal{Y}$. By Lemma \ref{doptset}, we have $\hat{x}=\tau\cdot \prox_{\|\cdot\|}(\frac{1}{\mu}\mathcal{A}^*\hat{y})$. Together with $x^{k+1}= \tau\cdot\prox_{\|\cdot\|}(\frac{1}{\mu}\mathcal{A}^*y^k)$ and Lemma \ref{lem1}, we derive that
 \begin{subequations}
\begin{align}
& \langle \frac{1}{\mu}\mathcal{A}^*y^k-\frac{1}{\mu}\mathcal{A}^*\hat{y}, x^{k+1}- \hat{x}\rangle \\
=&\tau\cdot \langle \frac{1}{\mu}\mathcal{A}^*y^k-\frac{1}{\mu}\mathcal{A}^*\hat{y}, \prox_{\|\cdot\|}(\frac{1}{\mu}\mathcal{A}^*y^k)- \prox_{\|\cdot\|}(\frac{1}{\mu}\mathcal{A}^*\hat{y})\rangle \\
\geq & \tau\cdot \left\|\prox_{\|\cdot\|}(\frac{1}{\mu}\mathcal{A}^*y^k)-\prox_{\|\cdot\|}(\frac{1}{\mu}\mathcal{A}^*\hat{y})\right\|_2^2 \\
=&\tau^{-1}\cdot \| x^{k+1}- \hat{x}\|_2^2
\end{align}
\end{subequations}
Using this inequality, we have
 \begin{subequations}
\begin{align}
\|y^{k+1}-\hat{y}\|_2^2=&\|y^k-\hat{y} +h (b-\mathcal{A}x^{k+1})\|_2^2 \\
=&\|y^k-\hat{y} +h (\mathcal{A}\hat{x}-\mathcal{A}x^{k+1})\|_2^2 \\
=& \|y^k-\hat{y}\|_2^2 -2h\mu \langle \frac{1}{\mu}\mathcal{A}^*y^k-\frac{1}{\mu}\mathcal{A}^*\hat{y}, x^{k+1}- \hat{x}\rangle + h^2\|\mathcal{A}\hat{x}-\mathcal{A}x^{k+1}\|_2^2 \\
\leq & \|y^k-\hat{y}\|_2^2-2h\frac{\mu}{\tau}\| x^{k+1}- \hat{x}\|_2^2+h^2\|\mathcal{A}\|^2\| x^{k+1}- \hat{x}\|_2^2\\
=& \|y^k-\hat{y}\|_2^2-h(\frac{2\mu}{\tau}-h\|\mathcal{A}\|^2)\| x^{k+1}- \hat{x}\|_2^2.
\end{align}
\end{subequations}
Therefore, under the assumption $0<h<\frac{2\mu}{\tau\|\mathcal{A}\|^2}$ we can make the following claims:

\textbf{claim 1:} $\|y^{k+1}-\hat{y}\|_2$ is monotonically nonincreasing in $k$ and thus converges to a limit;

\textbf{claim 2:} $\|x^{k+1}-\hat{x}\|_2 $ converges to 0 as $k$ tends to $+\infty$, i.e., $\lim_{k\rightarrow +\infty} x^{k+1}=\hat{x}$.

From claim 1, it follows that $\{y^k\}$ is bounded and thus has a converging subsequence $y^{k_i}$. Let $\bar{y}=\lim_{i\to\infty}y^{k_i}$. By the Lipschitz continuity of the proximal operator, proved in Lemma \ref{lem1}, we have
$$
\hat{x}=\lim_{i\rightarrow \infty} x^{k_i+1}=\lim_{i\rightarrow \infty} \tau\cdot \prox_{\|\cdot\|}(\frac{1}{\mu}\mathcal{A}^*y^{k_i})=\tau\cdot \prox_{\|\cdot\|}(\frac{1}{\mu}\mathcal{A}^*\bar{y}),
$$
so $\bar{y}\in \mathcal{Y}$ by \eqref{dualY}. Recall $\hat{y}\in\mathcal{Y}$ is arbitrary. Hence, claim 1 holds for $\hat{y}=\bar{y}$. If $\{y^k\}$ had another limit point, then $\|y^{k+1}-\bar{y}\|_2$ would fail to be monotonic. So, $y^k$ converges to $\bar{y}\in\mathcal{Y}$ (in norm).

\end{proof}
\begin{remark} Being a dual gradient algorithm, it is well known that the \emph{dual objective sequence} converges at a rate of $O(1/k)$. With Nesterov's acceleration \cite{hmg}, the rate improves to $O(1/k^2)$. For piece-wise linear norm $\|\cdot\|$, such as the 1-norm, the rate improves to $O(e^{-k})$ and applies to both the  sequence and primal/dual point sequences following the arguments in \cite{ly,z3}.
\end{remark}

\section{Extension to gauge}\label{sc:gauge}
Some  interesting models such as those based on total variation, analysis $\ell_1$, and fused Lasso use objective functions that are related to but more general than norms. To extend our results to these models, we study the gauge objective.
\begin{definition}[Gauge \cite{r}]
Let $C\subset \RR^n$ be a closed convex set containing the origin. The gauge of $C$ is the function $\gamma_C:\RR^n\to \RR$ defined by
$$\gamma_C(x)=\inf \{\lambda>0: x\in \lambda C\}.$$
\end{definition}
If $C$ is bounded and symmetric and has a nonempty interior, then $\gamma_C$ recovers a norm, whose unit ball is $C$. If such $C$ is unbounded, then $\gamma_C$ generalizes to a semi-norm. Recent paper \cite{v} studies (strongly) piecewise regular gauges for signal recovery, which include analysis-type $\ell_1$ semi-norms such as total variation and fused Lasso. We consider a general gauge function $J$ in the following model
\begin{equation}\label{Gau}
\min  J(x), \quad\st~ \mathcal{A} x=b,
\end{equation}
and its augmented model
\begin{equation}\label{Aug1}
\min  P(x)\triangleq  J(x) +\frac{1}{2\tau}\|x\|_2^2, \quad\st~ \mathcal{A} x=b.
\end{equation}
%where the regularizer $J(x)$ is some gauge function.

%Since in problem (\ref{Gau}) and its augmented model $J(x)$ is only required to be a gauge function, the general models (\ref{Gau}) and \eqref{Aug1} cover the cases of the analysis-type $\ell_1$ semi-norms.

%The convex problem (\ref{Aug}) is very general but still can not cover many interesting and important cases such as total variation and fused Lasso. In this part, we take the gauge function as a regularizer \cite{v} and consider the convex problem
%\begin{equation}\label{Gau}
%\min  J(x), \quad\st~ \mathcal{A} x=b,
%\end{equation}
%and its augmented model
%\begin{equation}\label{Aug1}
%\min  P(x)\triangleq  J(x) +\frac{1}{2\tau}\|x\|_2^2, \quad\st~ \mathcal{A} x=b,
%\end{equation}
%where the regularizer $J(x)$ is some gauge function.

\subsection{Gauge and its polar}
We collect the definitions of the polar set and polar gauge, as well as some useful properties from \cite{r}.
%More information of gauge function can be found in \cite{r}.
% \begin{definition}[Gauge]
% Let $C\subset \RR^n$ be a non-empty closed convex set containing the origin. The gauge of $C$ is the function $\gamma_C$ defined on $\RR^n$ by
% $$\gamma_C(x)=\inf \{\lambda>0: x\in \lambda C\}.$$
% \end{definition}
%If $C$ is bounded with nonempty interior and symmetric, then $\gamma_C$ is a norm with $C$ being its unit ball. However, when $C$ is only symmetric with nonempty interior, then $\gamma_C$ becomes a semi-norm. In the paper \cite{v}, the authors defined (strongly) piecewise regular gauges which include the analysis-type $\ell_1$ semi-norms (e.g., the total variation and the Fused Lasso). Since in problem (\ref{Gau}) and its augmented model $J(x)$ is only required to be a gauge function, the general models (\ref{Gau}) and \eqref{Aug1} cover the cases of the analysis-type $\ell_1$ semi-norms.
\begin{definition}[Polar set]
 Let $C\subset \RR^n$ be a non-empty closed convex set. The polar of $C$ is
 $$C^o= \{v:\langle v, x\rangle \leq 1,~~\forall x\in C\}.$$
\end{definition}
\begin{definition}[Polar Gauge]
 The polar of a gauge $\gamma_C$ is the function $\gamma^\circ_C:\RR^n\to \RR$ defined by
 $$\gamma^\circ_C(u)=\inf \{\mu\geq0: \langle x, u\rangle \leq \mu\gamma_C(x), \forall x\}.$$
\end{definition}

 \begin{lemma}\label{lem5}
  Let $C\subset \RR^n$ be a closed convex set containing the origin. Then,
   \begin{enumerate}
  \item   $\gamma_C^o=\gamma_{C^o}$, or equivalently $C^o=\{x: \gamma_C^o(x)\leq 1\}=\{x:  \gamma_{C^o}(x)\leq 1\}$, which is a closed convex set.

  \item   $\gamma_C=\sigma_{C^o}$ and $\gamma_{C^o}=\sigma_C$, where $\sigma_C(x)$ is the support function of $C$.

  \item   Moreau decomposition: $v=\prox_{r_C(\cdot)}(v)+\Pi_{C^o}(v)$ for any $v\in \RR^n$.

  \end{enumerate}
\end{lemma}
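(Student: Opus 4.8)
The plan is to reduce all three parts to a single fundamental identity: for a closed convex set $C$ containing the origin, the convex conjugate of its gauge is the indicator of the polar set, $\gamma_C^* = \delta_{C^o}$, where $\delta_{C^o}(u)=0$ if $u\in C^o$ and $+\infty$ otherwise. To establish it I would compute $\gamma_C^*(u)=\sup_x(\langle u,x\rangle-\gamma_C(x))$ directly. If $u\in C^o$, then for every $x$ with $\gamma_C(x)<\infty$ the normalized point $x/\gamma_C(x)$ lies in $C$ (here closedness of $C$ is used), so $\langle u,x\rangle\leq\gamma_C(x)$ and the supremum is $0$, attained at $x=0$. If $u\notin C^o$, there is $x_0\in C$ with $\langle u,x_0\rangle>1\geq\gamma_C(x_0)$; scaling $x=tx_0$ and letting $t\to\infty$ drives the supremum to $+\infty$. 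This yields $\gamma_C^*=\delta_{C^o}$.

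Part 2 then follows by biconjugation. Since $\delta_C^*=\sigma_C$ by definition of the support function, taking conjugates twice gives $\gamma_C=\gamma_C^{**}=\delta_{C^o}^*=\sigma_{C^o}$, which is the first identity. Replacing $C$ by $C^o$ and invoking the bipolar theorem $C^{oo}=C$ (valid precisely because $C$ is closed, convex, and contains the origin) gives $\gamma_{C^o}=\sigma_{C^{oo}}=\sigma_C$, the second identity. For Part 1, I would use the fundamental identity to rewrite the constraint defining the polar gauge: for $\mu>0$, the condition $\langle x,u\rangle\leq\mu\gamma_C(x)$ for all $x$ is, after dividing by $\mu$, exactly $\gamma_C^*(u/\mu)\leq 0$, hence $u/\mu\in C^o$, i.e. $u\in\mu C^o$ (the case $u=0$ is handled by $\mu=0$). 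Taking the infimum over such $\mu$ gives $\gamma_C^o(u)=\inf\{\mu>0:u\in\mu C^o\}=\gamma_{C^o}(u)$. Since $C^o$ is an intersection of closed half-spaces it is automatically closed and convex, which yields the stated set description.

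Part 3 is then immediate from the general Moreau decomposition in Lemma \ref{lem3} applied to $f=\gamma_C$ (I read the symbol $r_C$ in the statement as $\gamma_C$). By the fundamental identity $f^*=\gamma_C^*=\delta_{C^o}$, and since the proximal operator of an indicator function is the Euclidean projection, $\prox_{\delta_{C^o}}(v)=\argmin_{x\in C^o}\frac{1}{2}\|x-v\|_2^2=\Pi_{C^o}(v)$. Substituting into $v=\prox_f(v)+\prox_{f^*}(v)$ gives $v=\prox_{\gamma_C}(v)+\Pi_{C^o}(v)$.

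The main obstacle throughout is the careful justification of the fundamental identity $\gamma_C^*=\delta_{C^o}$ in the unbounded case, where $\gamma_C$ is only a semi-norm: it may vanish on a nontrivial subspace and may equal $+\infty$ off the cone generated by $C$. The delicate point is the normalization step for boundary points, where $\gamma_C(x)=1$ does not force $x\in C$ unless $C$ is closed; this is exactly where the closedness hypothesis on $C$ (and, equivalently, the bipolar theorem used in Part 2) enters. Once this identity is secured, the three parts follow as described with only routine conjugacy bookkeeping.
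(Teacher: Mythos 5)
Your proposal is correct, but it takes a more self-contained route than the paper. The paper simply cites Rockafellar (Corollaries 15.1.1 and 15.1.2) for parts (i) and (ii) and only writes out a proof of part (iii), which it does by setting $f=\delta_{C^o}$, computing $f^*=\sigma_{C^o}=\gamma_C$ via part (ii), and invoking Lemma \ref{lem3}. You instead derive everything from the single conjugacy identity $\gamma_C^*=\delta_{C^o}$, proved directly, and then obtain (ii) by biconjugation plus the bipolar theorem, (i) by rewriting the defining constraint of the polar gauge, and (iii) by Moreau decomposition applied to $f=\gamma_C$ (the mirror image of the paper's choice $f=\delta_{C^o}$). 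What your approach buys is independence from the cited corollaries; what the paper's choice of $f=\delta_{C^o}$ buys in part (iii) is that closedness of the relevant function is automatic, since $C^o$ is an intersection of half-spaces, whereas your applications of Fenchel--Moreau ($\gamma_C=\gamma_C^{**}$) and of Lemma \ref{lem3} to $f=\gamma_C$ both require the separate (true but not free) fact that $\gamma_C$ is lower semicontinuous when $C$ is closed and contains the origin. Two small points to tighten: first, in the verification that $\langle u,x\rangle\le\gamma_C(x)$ for $u\in C^o$, the normalization $x/\gamma_C(x)$ is undefined when $\gamma_C(x)=0$ with $x\neq 0$ (the recession/lineality directions of $C$); the clean fix is to note that $x/\lambda\in C$ for every $\lambda>\gamma_C(x)$ and let $\lambda\downarrow\gamma_C(x)$, which handles all cases at once and in fact does not even need $C$ closed for this inequality. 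Second, you should state explicitly that you are invoking lower semicontinuity of $\gamma_C$ where biconjugation and Lemma \ref{lem3} are used, since that is where the closedness hypothesis genuinely enters your argument.
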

\begin{proof}
Parts (i) and (ii) are given in corollaries 15.1.1 and 15.1.2 in \cite{r}, respectively. To show part (iii), let $f(x)=\delta_{C^o}(x)$, where $\delta_\mathcal{C}(x)$ is the indicator function
 \begin{eqnarray*}
\delta_\mathcal{C}(x)=\left\{\begin{array}{ll}
+\infty~~&x\notin \mathcal{C} \\
0~~&x\in\mathcal{C}.
\end{array} \right.
\end{eqnarray*}
Then, the convex conjugate of $f(x)$ is
$$f^*(x)=\sup_y(\langle x, y\rangle- \delta_{C^o}(y))=\max_{y\in C^o}\langle x, y\rangle\stackrel{\theta_1}{=}\sigma_{C^o}(x)\stackrel{\theta_2}{=}\gamma_C(x),$$
where $\theta_1$ follows from the definition of support function and $\theta_2$ follows from part (ii). From $\prox_{f(\cdot)}(v)=\Pi_{C^o}(v)$ and $v=\prox_{f(\cdot)}(v)+\prox_{f^*(\cdot)}(v)$, the result follows.
\end{proof}

\subsection{Dual analysis and algorithm}
Let $J(x)=\gamma_C(x)$ be a gauge function. %, i.e., there is a closed non-empty convex $C$ such that $J(x)$.
Based on part (ii) of Lemma \ref{lem5}, we have $\gamma_C(x)=\sigma_{C^o}(x)=\max_{z\in C^o}\langle x, z\rangle$, from which we  can follow subsection \ref{sec:3} and derive the dual problem of \eqref{Aug1}:
\begin{equation}\label{Dual1}
 \min_y D_J(y):= -\langle y, b\rangle +\frac{\tau}{2} \min_{z\in C^o} \|\mathcal{A}^*y -  z\|_2^2,
\end{equation}
where the optimal $z=\Pi_{C^o}(\mathcal{A}^*y)$ is a function of $y$.
Following the definition of $h_\mathcal{Z}(x)$ in (\ref{hfun}), we have %$D_J(y)$ can be written equivalently as
$$D_J(y)= -\langle y, b\rangle +\frac{\tau}{2} h_{C^o}(\mathcal{A}^*y )^2.$$
By \eqref{gradh} and part (iii) of Lemma \ref{lem5}, we obtain
\begin{equation}\label{gradD1}
\nabla D_J(y)= -b +\tau\mathcal{A} \cdot\prox_{\gamma_C(\cdot)}(\mathcal{A}^*y)=-b +\tau\mathcal{A} \cdot\prox_{J(\cdot)} (\mathcal{A}^*y).
\end{equation}
We have the primal-dual relationship:
$x=\tau (\mathcal{A}^*y-z)$ and
 $z=\Pi_{C^o}(\mathcal{A}^*y)$. If $y$ is dual optimal, the equations give $x$ that is primal optimal. Similar to Lemma \ref{doptset}, we have the following result:
\begin{lemma}\label{doptset1}
 Let $\hat{x}$ be the unique solution to problem \eqref{Aug1}. Then the dual solution set to problem \eqref{Dual1} is
\begin{equation}\label{dualY1}
\mathcal{W}=\left\{y: \tau\cdot \prox_{J(\cdot)}(\mathcal{A}^*y)=\hat{x}\right\},
\end{equation}
  which is  nonempty and convex.
  \end{lemma}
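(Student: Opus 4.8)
The plan is to mirror the proof of Lemma~\ref{doptset}, replacing the norm $\|\cdot\|$ by the gauge $J=\gamma_C$, the dual unit ball $\mathcal{B}$ by the polar set $C^o$, and using the gradient formula \eqref{gradD1}. Since $D_J$ is convex (an affine term plus the squared distance-to-$C^o$ function composed with the linear map $\mathcal{A}^*$) and differentiable, its minimizers are exactly its stationary points, so the dual solution set is
$$
\mathcal{W}'=\{y:\nabla D_J(y)=0\}=\left\{y:\tau\mathcal{A}\cdot\prox_{J(\cdot)}(\mathcal{A}^*y)=b\right\}.
$$
First I would record the easy inclusion $\mathcal{W}\subset\mathcal{W}'$: any $y\in\mathcal{W}$ satisfies $\tau\cdot\prox_{J(\cdot)}(\mathcal{A}^*y)=\hat{x}$, and since $\hat{x}$ is feasible we have $\mathcal{A}\hat{x}=b$, whence $\nabla D_J(y)=0$. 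It then suffices to establish the reverse inclusion $\mathcal{W}'\subset\mathcal{W}$.

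For the reverse inclusion, I would take $\hat{y}\in\mathcal{W}'$ and invoke the primal-dual relationship $x=\tau(\mathcal{A}^*\hat{y}-z)$ with $z=\Pi_{C^o}(\mathcal{A}^*\hat{y})$. Because $\hat{y}$ is dual optimal and strong duality holds, this $x$ solves the primal problem \eqref{Aug1}; by the strong convexity contributed by the term $\frac{1}{2\tau}\|\cdot\|_2^2$, the primal minimizer is unique, so $x=\hat{x}$. Applying the Moreau decomposition in part (iii) of Lemma~\ref{lem5}, $\mathcal{A}^*\hat{y}-\Pi_{C^o}(\mathcal{A}^*\hat{y})=\prox_{J(\cdot)}(\mathcal{A}^*\hat{y})$, so that $\tau\cdot\prox_{J(\cdot)}(\mathcal{A}^*\hat{y})=\hat{x}$, and hence $\hat{y}\in\mathcal{W}$ by the definition \eqref{dualY1}. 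This gives $\mathcal{W}'=\mathcal{W}$.

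Convexity and nonemptiness then follow exactly as before. Convexity of $\mathcal{W}$ is automatic as the minimizer set of the convex function $D_J$, and can also be seen explicitly from the monotonicity of $\nabla D_J$ via the firm nonexpansiveness in Lemma~\ref{lem1}:
\begin{align*}
\langle\nabla D_J(y_1)-\nabla D_J(y_2),y_1-y_2\rangle
&=\tau\langle\prox_{J(\cdot)}(\mathcal{A}^*y_1)-\prox_{J(\cdot)}(\mathcal{A}^*y_2),\,\mathcal{A}^*y_1-\mathcal{A}^*y_2\rangle\\
&\geq\tau\left\|\prox_{J(\cdot)}(\mathcal{A}^*y_1)-\prox_{J(\cdot)}(\mathcal{A}^*y_2)\right\|_2^2\geq0,
\end{align*}
while nonemptiness comes from the consistency of $\mathcal{A}x=b$, which guarantees existence of a dual optimum. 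The main obstacle, and the one point demanding care, is justifying the primal-dual correspondence for a general gauge rather than a norm: I must confirm that $J=\gamma_C$ is a closed proper convex function, so that $\prox_{J(\cdot)}$ is well defined and Lemma~\ref{lem5}(iii) applies, and that strong duality together with the min-max interchange used to derive \eqref{Dual1} legitimately turns a stationary dual point into a primal-optimal $x$. Once these structural facts and the uniqueness of $\hat{x}$ are secured, the remainder is a routine transcription of Lemma~\ref{doptset}.
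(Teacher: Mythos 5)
Your proof is correct and is exactly the adaptation the paper intends: the paper gives no separate proof of Lemma~\ref{doptset1}, stating only that it is ``similar to Lemma~\ref{doptset},'' and your argument transcribes that proof faithfully, substituting $C^o$ for $\mathcal{B}$ and invoking Lemma~\ref{lem5}(iii) in place of the norm-case Moreau decomposition \eqref{Moreau}. The structural points you flag (closedness and properness of $\gamma_C$, validity of the min--max interchange) are handled implicitly by the paper's standing assumptions on $C$, so no further work is needed.
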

%  The proof is similar to that of Lemma \ref{doptset}; we omit it here.
%
%  \subsection{Algorithm and convergence}
Based on \eqref{gradD1}, one can obtain the dual gradient ascent iteration for problem \eqref{Aug1}. From the above primal-dual relationship, we give the primal-dual form of this algorithm as follows:
  \begin{eqnarray}\label{form5}
\left\{\begin{array}{ll}
x^{k+1}= \tau\cdot\prox_{J(\cdot)}(\mathcal{A}^*y^k) \\
y^{k+1}=y^{k}+h(b-\mathcal{A}x^{k+1}).
\end{array} \right.
\end{eqnarray}
Similar to Theorem \ref{thm:cvg}, we can show:
\begin{theorem}
Set step size $h\in (0, \frac{2}{\tau\|\mathcal{A}\|^2})$ and $y^0=0$ in iteration (\ref{form5}). Let $\hat{x}$ be the unique minimizer to problem (\ref{Aug1}) and $\mathcal{W}$ be the solution set to problem \eqref{Dual1}. Then, $\lim_{k\rightarrow +\infty}x^k=\hat{x}$, and there exists a point $\bar{y}\in\mathcal{W}$ such that $\lim_{k\rightarrow +\infty} y^k=\bar{y}$.
\end{theorem}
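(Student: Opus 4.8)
The plan is to mirror the proof of Theorem \ref{thm:cvg} almost verbatim, since the gauge model \eqref{Aug1} is precisely the model \eqref{Aug} with the redundant parameter $\mu$ set to $1$ and the norm replaced by the gauge $J=\gamma_C$. All the structural ingredients needed for that argument are already in place: Lemma \ref{lem1} guarantees that $\prox_{J(\cdot)}$ is firmly nonexpansive and Lipschitz, since it holds for any closed proper convex function and hence for $J$; the gradient identity \eqref{gradD1} supplies $\nabla D_J$; and Lemma \ref{doptset1} characterizes the dual solution set as $\mathcal{W}=\{y:\tau\cdot\prox_{J(\cdot)}(\mathcal{A}^*y)=\hat{x}\}$, which is nonempty and convex. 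No genuinely new difficulty arises; the task is to transcribe the Fej\'er-monotonicity estimate with $\mu=1$.

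First I would fix any $\hat{y}\in\mathcal{W}$, so that $\hat{x}=\tau\cdot\prox_{J(\cdot)}(\mathcal{A}^*\hat{y})$ by Lemma \ref{doptset1}. Combining $x^{k+1}=\tau\cdot\prox_{J(\cdot)}(\mathcal{A}^*y^k)$ with the firm nonexpansiveness in Lemma \ref{lem1} gives the key one-sided inner-product bound
\begin{equation*}
\langle \mathcal{A}^*y^k-\mathcal{A}^*\hat{y},\, x^{k+1}-\hat{x}\rangle \;\geq\; \tau^{-1}\,\|x^{k+1}-\hat{x}\|_2^2.
\end{equation*}
Next I would expand $\|y^{k+1}-\hat{y}\|_2^2$ using the update $y^{k+1}=y^k+h(b-\mathcal{A}x^{k+1})$ together with the feasibility identity $b=\mathcal{A}\hat{x}$, so that the cross term becomes $-2h\langle \mathcal{A}^*y^k-\mathcal{A}^*\hat{y},\,x^{k+1}-\hat{x}\rangle$. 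Applying the bound above together with $\|\mathcal{A}\hat{x}-\mathcal{A}x^{k+1}\|_2^2\leq \|\mathcal{A}\|^2\|x^{k+1}-\hat{x}\|_2^2$ yields
\begin{equation*}
\|y^{k+1}-\hat{y}\|_2^2 \;\leq\; \|y^k-\hat{y}\|_2^2 - h\Big(\tfrac{2}{\tau}-h\|\mathcal{A}\|^2\Big)\,\|x^{k+1}-\hat{x}\|_2^2.
\end{equation*}
Under $0<h<\frac{2}{\tau\|\mathcal{A}\|^2}$ the bracketed coefficient is positive, so $\{\|y^{k}-\hat{y}\|_2\}$ is nonincreasing, hence convergent, and $\|x^{k+1}-\hat{x}\|_2\to 0$, giving $x^k\to\hat{x}$.

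Finally I would extract from the boundedness of $\{y^k\}$ a subsequence $y^{k_i}\to\bar{y}$; Lipschitz continuity of $\prox_{J(\cdot)}$ (Lemma \ref{lem1}) together with $x^{k_i+1}\to\hat{x}$ forces $\hat{x}=\tau\cdot\prox_{J(\cdot)}(\mathcal{A}^*\bar{y})$, so $\bar{y}\in\mathcal{W}$ by \eqref{dualY1}. Since $\hat{y}$ was arbitrary, the monotonicity conclusion applies with $\hat{y}=\bar{y}$, and a second limit point would contradict the monotonic decrease of $\|y^k-\bar{y}\|_2$; hence $y^k\to\bar{y}$. The only point requiring care---and the closest thing to an obstacle---is ensuring that Lemma \ref{lem1} is invoked for the general convex function $J$ rather than a norm, and that the Moreau-type decomposition in Lemma \ref{lem5}(iii), which underlies \eqref{gradD1}, is what legitimizes replacing the projection $\Pi_{C^o}$ by $\prox_{J(\cdot)}$ throughout.
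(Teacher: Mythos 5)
Your proposal is correct and follows exactly the route the paper intends: the paper gives no separate proof for this theorem, stating only that it is ``similar to Theorem \ref{thm:cvg},'' and your transcription of that argument with $\mu=1$, $J=\gamma_C$ in place of the norm, and Lemma \ref{doptset1} in place of Lemma \ref{doptset} is precisely what is required. You also correctly note the one point needing verification --- that Lemma \ref{lem1} applies to $\prox_{J(\cdot)}$ because a gauge is a closed proper convex function --- so there is no gap.
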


% \begin{remark}
% Since the LBreg and  SVT algorithms are the special cases of the primal-dual algorithm (\ref{form2}), the convergence result above is valid for them. The convergence of primal sequence $\{x^k\}$ in both the LBreg and the SVT algorithms has been established in the literature. Recent work \cite{ly} proved that the dual sequence $\{y^k\}$ of the LBeg algorithm converges in the sense of projection, i.e., $\lim_{k\rightarrow +\infty}\min_{y\in\mathcal{Y}}\|y^k-y\|_2=0$. To the best of our knowledge, the convergence of dual sequence $\{y^k\}$ to some solution of dual problem is new.
% \end{remark}

%\section{Conclusions}

\section*{Acknowledgements}
We would like to thank Professor Jian-Feng Cai (U. Iowa) for suggestions and corrections. The work of H. Zhang is supported in part by the Graduate School of NUDT under Fund of Innovation B110202, Hunan Provincial Innovation Foundation for Postgraduate CX2011B008, and NSFC grant 61201328.  The work of L. Cheng is supported in part by NSFC grants 61271014 and 61072118. The work of W. Yin is supported in part by NSF grants DMS-0748839 and ECCS-1028790.

\small{

}

\end{document}